\DeclareMathOperator{\sign}{sgn}
\begin{document}


\title{Optimal stopping of an $\alpha$-Brownian bridge}

\author{Maik G\"orgens}
\address{Department of Mathematics, Uppsala University}
\curraddr{P.O. Box 480, 751 06 Uppsala, Sweden}
\email{maik@math.uu.se}

\date{\today}

\begin{abstract}
  We study the problem of stopping an $\alpha$-Brownian bridge as close as possible to its global maximum. This extends earlier results found for the Brownian bridge (the case $\alpha=1$). The exact behavior for $\alpha$ close to $0$ is investigated.
\end{abstract}

\keywords{optimal stopping, $\alpha$-Brownian bridge, confluent hypergeometric function}
\subjclass{60G40, 33C15}

\maketitle


\section{Introduction}\label{S:Intro}

We consider the stochastic differential equation
\equ[E:aBB_SDE]{ dX^{(\alpha)}_t = dW_t - \frac{\alpha X^{(\alpha)}_t}{1-t} dt, \qquad X^{(\alpha)}_0 = 0, \quad 0 \leq t < 1, }
where $\alpha \geq 0$ and $W = (W_t)_{t \in [0,1]}$ is standard Brownian motion. The unique strong solution of~\eqref{E:aBB_SDE} is given by $X^{(\alpha)}=(X^{(\alpha)}_t)_{t \in [0,1)}$ with
\equ[E:aBB_Sol]{ X^{(\alpha)}_t = \int_0^t \left( \frac{1-t}{1-s} \right)^\alpha dW_s, \qquad 0 \leq t < 1. }
If $\alpha > 0$ then $\lim_{t \rightarrow 1} X^{(\alpha)}_t = 0$ almost surely and thus $X^{(\alpha)}$ has an extension to $[0,1]$ with $X^{(\alpha)}_1 = 0$. The process $X^{(\alpha)}$ is called the $\alpha$-Brownian bridge with scaling parameter $\alpha$.

Let $\F = (\F_t)_{t \in [0,1]}$ be the natural filtration induced by $W$. We consider the optimal stopping problem
\equ[E:OS_Prob]{ V(\alpha) = \sup_{0 \leq \tau \leq 1} \E X^{(\alpha)}_\tau, }
where the supremum is taken over all $\F$-stopping times. We aim to find the values of the function $V$ as well as the stopping time $\tau^* = \tau^*(\alpha)$ for which the supremum is attained.

In the case $\alpha = 0$ we have $X^{(0)} = W$, i.e., $X^{(0)}$ is standard Brownian motion and thus a martingale with mean $0$. Hence, $\E X^{(0)}_\tau = 0$ for any stopping time $\tau$ with $0 \leq \tau \leq 1$ and so $V(0)=0$. If $\alpha$ tends to $\infty$ then $X^{(\alpha)}$ tends to the zero process and we expect that $V(\alpha)$ tends to $0$.

A possible application of our results were given in~\cite{Gor14}: as observed for example in~\cite{Ave03}, stock prices tend to end up at strike prices of heavily traded vanilla options at the time of their maturity and Brownian bridges were used there to describe this behavior. It was suggested in~\cite{Gor14} to replace the usual Brownian bridge by the $\alpha$-Brownian bridge in order to model the different behaviors of cautious ($\alpha > 1$) and incautious ($0 < \alpha < 1$) financial markets in a better way.

This work generalizes results from~\cite{Eks09} and~\cite{She69}, where the optimal stopping problem~\eqref{E:OS_Prob} was studied for the case $\alpha = 1$, i.e., for the usual Brownian bridge.

\section{The solution of the optimal stopping problem}

For $x \in \R$ and $0 \leq t \leq 1$ we introduce the value function
\equ[E:Val_Func]{ V(x, t, \alpha) = \sup_{t \leq \tau \leq 1} \E_{x,t} X^{(\alpha)}_\tau, }
where the supremum is taken over all $\F$-stopping times $\tau$ with $t \leq \tau \leq 1$ and $\E_{x,t}$ means expectation under the condition $X^{(\alpha)}_t = x$. Then the general theory of optimal stopping (see~\cite{Pes06}) yields that
\[ \tau_{t,x}^* = \inf\left\{ s \geq t: X^{(\alpha)}_s = V\left(X^{(\alpha)}_s, s, \alpha\right) \right\} \]
is optimal in~\eqref{E:Val_Func}, i.e., in order to solve the optimal stopping problem~\eqref{E:OS_Prob} we have to find $V(x, t, \alpha)$. The optimal stopping time in~\eqref{E:OS_Prob} is given by $\tau^*_{0,0}$.

From general optimal stopping theory, we expect the value function $V(x,t,\alpha)$ to solve the free boundary problem
\equ[E:PDE]{ 
\begin{cases}
  V_t(x,t,\alpha) - \frac{\alpha x}{1-t} V_x(x,t,\alpha) + \frac{1}{2} V_{xx}(x,t,\alpha) = 0, & \text{if $x < b(t,\alpha)$,} \\
  V(x,t,\alpha) = x, & \text{if $x = b(t,\alpha)$,} \\
  V_x(x,t,\alpha) = 1, & \text{if $x = b(t,\alpha)$,} \\
  V(x,t,\alpha) = 0, & \text{if $x = - \infty$,}
\end{cases}
}
where the stopping boundary $b(t,\alpha)$ is to be determined. We will solve~\eqref{E:PDE} for the different values of $\alpha > 0$. The verification that the candidate solution is the correct one can be done in exactly the same way as in~\cite{Eks09}.

With the ansatz $b(t,\alpha) = B(\alpha)\sqrt{1-t}$ and
\equ[E:Ansatz]{ V(x,t,\alpha) = \sqrt{1-t} f(x/\sqrt{1-t},\alpha), }
we obtain, with $y=x/\sqrt{1-t}$, the free boundary problem
\equ[E:ODE]{ 
\begin{cases}
  f''(y,\alpha) - (2\alpha-1)yf'(y,\alpha) - f(y,\alpha) = 0, & \text{if $y < B(\alpha)$,} \\
  f(y,\alpha) = y, & \text{if $y = B(\alpha)$,} \\
  f'(y,\alpha) = 1, & \text{if $y = B(\alpha)$,} \\
  f(y,\alpha) = 0, & \text{if $y = - \infty$.}
\end{cases}
}
Note in particular that we expect $f(\cdot, \alpha)$ to be continuously differentiable.

We introduce the confluent hypergeometric function of the first kind (see~\cite{Sla60}) by
\[ M(\gamma,\beta,z) = \sum_{n=0}^\infty \frac{\gamma^{(n)} z^n}{ \beta^{(n)} n! }, \qquad \gamma, \beta, z \in \R, \]
where the so-called Pochhammer polynomial $\gamma^{(n)}$ is defined by
\[ \gamma^{(0)} = 1, \qquad \gamma^{(n)} = (\gamma + n - 1) \gamma^{(n-1)}. \]
Moreover, by $\Gamma(\cdot)$ we denote the Gamma function. With this notation we can formulate
\begin{theorem}\label{T:1} Set
\[ \gamma(\alpha) = \alpha/(2\alpha-1) \qquad \text{and} \qquad z(y,\alpha)=y^2 (2\alpha-1)/2. \]
Then
\begin{enumerate}
  \item if $0 < \alpha < 1/2$, the differential equation in~\eqref{E:ODE} is solved by
  \[ f(y,\alpha) = C(\alpha) \left[ y M(\gamma(\alpha),3/2,z(y,\alpha) ) + \frac{\Gamma(1 - \gamma(\alpha)) M(\gamma(\alpha)-1/2,1/2,z(y,\alpha) )}{\sqrt{2(1-2\alpha)}\Gamma(3/2 - \gamma(\alpha))} \right] \]
  for $y < B(\alpha)$ and $f(y,\alpha) = y$ for $y \geq B(\alpha)$;
  \item if $\alpha = 1/2$, the free boundary problem~\eqref{E:ODE} is solved by
  \[ f(y,1/2) = \begin{cases} e^{y-1}, & \text{for $y < 1$}, \\ y & \text{for $y \geq 1$}; \end{cases} \]
  \item if $\alpha > 1/2$, the differential equation in~\eqref{E:ODE} is solved by
  \[ f(y,\alpha) = C(\alpha) \left[ y M(\gamma(\alpha),3/2,z(y,\alpha) ) + \frac{\Gamma(\gamma(\alpha) - 1/2)}{\sqrt{2(2\alpha-1)}\Gamma(\gamma(\alpha))} M(\gamma(\alpha)-1/2,1/2,z(y,\alpha) ) \right] \]
  for $y < B(\alpha)$ and $f(y,\alpha) = y$ for $y \geq B(\alpha)$.  
\end{enumerate}
  For $\alpha \neq 1/2$ the constant $B(\alpha)$ is the positive solution of
  \equ[E:B_Def]{ B(\alpha) = f(B(\alpha), \alpha) \ \big/ f'(B(\alpha), \alpha), }
  and the constant $C(\alpha)$ is determined by $B(\alpha) = f(B(\alpha), \alpha)$.
\end{theorem}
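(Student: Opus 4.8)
The plan is to reduce the linear second-order equation in~\eqref{E:ODE} to Kummer's confluent hypergeometric equation, exhibit a fundamental system of solutions, single out the one-dimensional family that decays as $y\to-\infty$, and finally use the two conditions at $B(\alpha)$ to fix the remaining constants. For $\alpha\neq1/2$ I would substitute $z=z(y,\alpha)=(2\alpha-1)y^{2}/2$; since $f'=(2\alpha-1)y\,f_{z}$ and $f''=(2\alpha-1)f_{z}+(2\alpha-1)^{2}y^{2}f_{zz}$, and $(2\alpha-1)y^{2}=2z$, the equation $f''-(2\alpha-1)yf'-f=0$ turns into $z f_{zz}+\left(\tfrac12-z\right)f_{z}-\left(\gamma(\alpha)-\tfrac12\right)f=0$, i.e. Kummer's equation with parameters $a=\gamma(\alpha)-1/2$ and $b=1/2$ (using $\tfrac{1}{2(2\alpha-1)}=\gamma(\alpha)-\tfrac12$). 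Its standard fundamental system $M(a,b,z)$ and $z^{1-b}M(a-b+1,2-b,z)=z^{1/2}M(\gamma(\alpha),3/2,z)$ yields, after undoing the substitution and passing to the entire branch (the factor $z^{1/2}$ being a constant multiple of $|y|$, while $M(\gamma(\alpha),3/2,z(y,\alpha))$ is an entire even function of $y$, so the pertinent odd solution is the entire function $y\,M(\gamma(\alpha),3/2,z(y,\alpha))$), the two independent solutions $y\,M(\gamma(\alpha),3/2,z(y,\alpha))$ and $M(\gamma(\alpha)-1/2,1/2,z(y,\alpha))$ of the equation in~\eqref{E:ODE}; hence its general solution on $\{y<B(\alpha)\}$ is $A\,y\,M(\gamma(\alpha),3/2,z)+D\,M(\gamma(\alpha)-1/2,1/2,z)$. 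When $\alpha=1/2$ the equation reads $f''=f$, so the decay condition forces $f(y,1/2)=Ae^{y}$.

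Next I would impose $f(-\infty,\alpha)=0$. Using the classical large-argument asymptotics of $M$, the two basis solutions behave according to the sign of $2\alpha-1$: for $\alpha>1/2$ one has $z\to+\infty$ as $y\to-\infty$ and both grow like $e^{z}$ times a power of $z$, whereas for $0<\alpha<1/2$ one has $z\to-\infty$ and both grow algebraically, like $(-z)^{1/2-\gamma(\alpha)}$ with $1/2-\gamma(\alpha)>1/2$. In each regime the solutions \emph{without} this dominant growth span a one-dimensional space, generated by the Tricomi function $U(\gamma(\alpha)-1/2,1/2,z)$ when $\alpha>1/2$ and by $e^{z}U(1-\gamma(\alpha),1/2,-z)$ when $0<\alpha<1/2$. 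The dominant part of any solution is carried entirely by a single $M$-type component whose leading coefficient is a ratio of $\Gamma$-values that one checks to be finite and nonzero (for $\alpha>1/2$: $\gamma(\alpha),\gamma(\alpha)-1/2>0$; for $0<\alpha<1/2$: $\gamma(\alpha)<0$, so $1-\gamma(\alpha),3/2-\gamma(\alpha)>1$); hence the single linear condition ``this leading coefficient vanishes'' removes the whole dominant part and determines the ratio $D/A$ up to scaling. Writing the relevant $U$ by the connection formula $U(a,b,w)=\tfrac{\Gamma(1-b)}{\Gamma(a-b+1)}M(a,b,w)+\tfrac{\Gamma(b-1)}{\Gamma(a)}w^{1-b}M(a-b+1,2-b,w)$ (and, for $0<\alpha<1/2$, also the Kummer transformation $e^{-w}M(p,q,w)=M(q-p,q,-w)$ to clear the exponential factor), together with $\Gamma(3/2)=\tfrac12\Gamma(1/2)$ and $\Gamma(-1/2)=-2\Gamma(1/2)$, should reproduce exactly the bracketed expressions in (1) and (3), with the stated coefficients $\Gamma(1-\gamma(\alpha))/(\sqrt{2(1-2\alpha)}\,\Gamma(3/2-\gamma(\alpha)))$ and $\Gamma(\gamma(\alpha)-1/2)/(\sqrt{2(2\alpha-1)}\,\Gamma(\gamma(\alpha)))$.

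For the constants I would write $f(y,\alpha)=C(\alpha)g(y,\alpha)$ with $g(\cdot,\alpha)$ the bracketed function (which involves neither $B(\alpha)$ nor $C(\alpha)$). The two conditions $f(B(\alpha),\alpha)=B(\alpha)$ and $f'(B(\alpha),\alpha)=1$ (the smooth fit, which holds automatically on the side $f(y)=y$) become $C(\alpha)g(B(\alpha),\alpha)=B(\alpha)$ and $C(\alpha)g'(B(\alpha),\alpha)=1$; dividing the first by the second gives~\eqref{E:B_Def}, and the first then fixes $C(\alpha)$. It remains to observe, as in~\cite{Eks09}, that $y\mapsto g(y,\alpha)/g'(y,\alpha)$ meets the diagonal at a unique positive point, so that $B(\alpha)$ is well defined; for $\alpha=1/2$ the same two conditions on $f(y,1/2)=Ae^{y}$ give $B(1/2)=1$ and $A=e^{-1}$, i.e. case~(2).

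I expect the condition at $-\infty$ to be the main obstacle: identifying, in each of the two regimes, the correct one-dimensional recessive solution space --- hence the correct ratio $D/A$ --- requires careful use of the confluent hypergeometric asymptotics together with the $M$--$U$ connection and Kummer transformation identities, and the bookkeeping of the $\Gamma$-prefactors, and of the branch of $z^{1/2}$ versus the entire factor $y$, is the delicate point. The reduction to Kummer's equation and the algebra of the free-boundary matching are routine by comparison.
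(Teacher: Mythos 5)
Your proposal is correct and arrives at exactly the stated formulas, but it organizes the reduction genuinely differently from the paper. The paper splits $f$ into odd and even parts, uses the two substitutions $f_1=y\,g_1(z)$ and $f_2=|y|\,g_2(z)$ (both leading to Kummer's equation with $\beta=3/2$ and $\gamma=\alpha/(2\alpha-1)$), works with four constants on the half-lines $y<0$ and $y>0$, and then imposes continuity of $f$ and $f'$ at $y=0$ in addition to the decay at $-\infty$; for $0<\alpha<1/2$ it must introduce an ad hoc real solution $W$ because the second standard solution $N(\gamma,3/2,z)=z^{-1/2}M(\cdot)$ is imaginary for $z<0$. You instead substitute $z=(2\alpha-1)y^2/2$ once, land on Kummer's equation with parameters $(a,b)=(\gamma(\alpha)-1/2,\,1/2)$, and observe that the two resulting solutions $M(\gamma(\alpha)-1/2,1/2,z(y,\alpha))$ and $y\,M(\gamma(\alpha),3/2,z(y,\alpha))$ are entire (one even, one odd) and therefore span the full two-dimensional solution space of the nonsingular ODE on $(-\infty,B(\alpha))$, with no gluing at the origin required. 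The decay condition then becomes a single linear condition, which you resolve by identifying the recessive solution ($U(\gamma(\alpha)-1/2,1/2,z)$ for $\alpha>1/2$, and $e^{z}U(1-\gamma(\alpha),1/2,-z)$ for $0<\alpha<1/2$) and unwinding the connection formula; the resulting ratios do reproduce the stated $\Gamma$-prefactors once the branch $z^{1/2}=-\sqrt{|2\alpha-1|/2}\;y$ on $y<0$ is accounted for, as you indicate. Your route buys a cleaner treatment of the case $0<\alpha<1/2$ (no auxiliary $W$) and eliminates the continuity conditions at $y=0$; the paper's route keeps $\beta=3/2$ throughout and stays closer to the computation of~\cite{Eks09} for $\alpha=1$. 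Two small remarks: first, when $z\to-\infty$ merely killing the leading coefficient of the algebraic expansion does not by itself give decay (for $\gamma(\alpha)<-1/2$ the next term $(-z)^{-1/2-\gamma(\alpha)}$ still grows), so the identification of the recessive solution as $e^{z}U(1-\gamma(\alpha),1/2,-z)$ --- which you do make --- is the step that actually closes the argument; second, like the paper, you leave existence and uniqueness of the positive root of~\eqref{E:B_Def} at the level of a continuity-and-sign argument, which matches the level of detail of the original proof.
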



Note that the solution $B(\alpha)$ in~\eqref{E:B_Def} is unique. Otherwise we would have two different solutions of~\eqref{E:PDE}, leading to two different solutions of the optimal stopping problem~\eqref{E:Val_Func}. This would be a contradiction to the unambiguity of the definition of $V(x,t,\alpha)$.

From Theorem~\ref{T:1} we find that the solution of the partial differential equation in~\eqref{E:PDE} is given according to~\eqref{E:Ansatz}. In particular the value function and the optimal stopping time in~\eqref{E:OS_Prob} are given by
\equ[E:V0]{ V(\alpha) = V(0,0,\alpha) = 
\begin{cases} 
  0, &\text{for $\alpha = 0$,} \\
  C(\alpha) \frac{\Gamma\left(1 - \frac{\alpha}{2\alpha-1}\right)}{\sqrt{2(1-2\alpha)}\Gamma\left(\frac{3}{2} - \frac{\alpha}{2\alpha-1}\right)}, &\text{for $0 < \alpha < 1/2$,} \\
  1/e, &\text{for $\alpha = 1/2$,} \\
  C(\alpha) \frac{\Gamma\left(\frac{\alpha}{2\alpha-1} - \frac{1}{2}\right)}{\sqrt{2(2\alpha-1)}\Gamma\left(\frac{\alpha}{2\alpha-1}\right)}, &\text{for $\alpha > 1/2$.}
\end{cases} }
and
\equ[E:tau]{  \tau^* = \tau^*(\alpha) = \inf\{ t \geq 0: X^{(\alpha)}_t \geq B(\alpha) \sqrt{1-t} \}. }

\begin{remark}[The case $\alpha = 1$]
  Theorem~\ref{T:1} yields
  \[ f(y, 1) = C \left(y M(1,3/2,y^2/2) + \sqrt{\pi/2} M(1/2,1/2,y^2/2) \right). \]
  Since $M(\gamma,\gamma,z) = e^z$ and
  \[ M(1, 3/2, z) = \frac{e^z}{\sqrt{z}} \int_0^{\sqrt{z}} e^{-s^2} ds, \]
  we obtain
  \begin{align*}
    f(y, 1) &= C \left[ \sign(y) e^{y^2/2} \int_0^{|y|} e^{-s^2/2} ds  + \sqrt{\pi/2} e^{y^2/2} \right] \\
      &= C \sqrt{\pi/2} e^{y^2/2} \left[ \sign(y) \sqrt{\frac{2}{\pi}} \int_0^{|y|} e^{-t^2/2} dt + 1 \right] \\
      &= C \sqrt{\pi/2} e^{y^2/2} \left[ \sign(y) (2 \Phi(|y|) - 1) + 1 \right] \\
      &= C \sqrt{2\pi} e^{y^2/2} \Phi(y)
  \end{align*}
  for all $y \in \R$, where
  \[ \Phi(y) = \frac{1}{\sqrt{2\pi}} \int_{-\infty}^y e^{-s^2/2} ds. \]
  From this we recover the result found in~\cite{Eks09} for $\alpha = 1$.
\end{remark}

The remaining part of this section is devoted to the proof of Theorem~\ref{T:1}.

\subsection{The case $\alpha = 1/2$} In this case the differential equation in~\eqref{E:ODE} reduces to
\[ f''(y,1/2) - f(y,1/2) = 0 \qquad \text{if $y < B(1/2)$,} \]
which has the general solution
\[ f(y,1/2) = C e^y + D e^{-y}. \]
The requirement $f(y, 1/2) \rightarrow 0$ as $y \rightarrow - \infty$ yields $D = 0$ and the conditions $f(y,1/2) = y$ and $f'(y,1/2) = 1$ for $y = B(1/2)$ yield $B(1/2)=1$ and $C = e^{-1}$.


\subsection{The case $\alpha \neq 1/2$} The function $f(y, \alpha)$ may be written as the sum of an odd function $f_1(y, \alpha)$ and an even function $f_2(y, \alpha)$. The ansatz $f_1(y, \alpha) = y g_1(y^2 (2\alpha-1)/2)$ and $z=z(y,\alpha)=y^2 (2\alpha-1)/2$ turns the differential equation
\[ f''(y) - (2\alpha-1)yf'(y) - f(y) = 0 \]
into
\[ z g_1''(z) + (3/2 - z) g_1'(z) - \frac{\alpha}{2\alpha-1}g_1(z) = 0, \]
which is Kummer's differential equation
\equ[E:Kummer]{ z g''(z) + (\beta - z) g'(z) - \gamma g(z) = 0 }
with parameters $\beta=3/2$ and $\gamma = \gamma(\alpha) = \alpha/(2\alpha-1)$. The ansatz $f_2(y, \alpha) = |y| g_2(y^2 (2\alpha-1)/2)$ yields Kummer's differential equation with the same parameters $\beta$ and $\gamma$ for $g_2$.

One solution of~\eqref{E:Kummer} is $M(\gamma,\beta,z)$. The asymptotic behavior of $M$ is (see formula~(4.1.7) and formula~(4.1.8) in~\cite{Sla60})
\begin{align}
  M(\gamma,\beta,z) &= \frac{\Gamma(\beta)}{\Gamma(\gamma)} e^z z^{\gamma-\beta} \left( 1 + \mathcal{O} \left( z^{-1} \right)\right), \qquad \text{as $z \rightarrow \infty$,} \quad \text{and} \label{E:Asympt_M+} \\
  M(\gamma,\beta,z) &= \frac{\Gamma(\beta)}{\Gamma(\beta-\gamma)} (-z)^{-\gamma} \left( 1 + \mathcal{O} \left( (-z)^{-1} \right)\right), \qquad \text{as $z \rightarrow -\infty$,} \label{E:Asympt_M-}
\end{align}
and the derivative of $M$ with respect to $z$ is
\equ[E:Deriv_M]{ M'(\gamma,\beta,z) = \gamma/\beta M(\gamma+1,\beta+1,z). }

A second solution of~\eqref{E:Kummer} is
\[ N(\gamma,\beta,z) = z^{1-\beta} M(\gamma-\beta+1, 2-\beta, z). \]
Since the values of $N$ are complex for negative $z$ and $\beta=3/2$ (as in our case), and since the asymptotic behavior of $M$ as $|z| \rightarrow \infty$ depends on the sign of $z$ we distinguish between positive $z$ (i.e., $\alpha > 1/2$) and negative $z$ (i.e., $\alpha < 1/2$).

\subsubsection{The case $\alpha > 1/2$}

We have $\gamma(\alpha) > 1/2$ and $z(y,\alpha) > 0$. Setting
\begin{align*}
  U(\gamma,\beta,z) &= \frac{\Gamma(1-\beta)}{\Gamma(\gamma-\beta+1)}M(\gamma,\beta,z)+\frac{\Gamma(\beta-1)}{\Gamma(\gamma)}N(\gamma,\beta,z) \\[4mm]
           &= \frac{\Gamma(1-\beta)}{\Gamma(\gamma-\beta+1)}M(\gamma,\beta,z)+\frac{\Gamma(\beta-1)}{\Gamma(\gamma)}z^{1-\beta}M(\gamma-\beta+1,2-\beta,z),
\end{align*}
we obtain by~\eqref{E:Asympt_M+} a solution of~\eqref{E:Kummer} with
\equ[E:Asympt_U+]{ U(\gamma,\beta,z) - z^{-\gamma} \longrightarrow 0, \qquad \text{as $z \rightarrow \infty$.} }
The function $U$ is called the confluent hypergeometric function of the second kind (again, see~\cite{Sla60} for more details). Since $M(\gamma,\beta,0) = 1$ for all $\beta, \gamma \in \R$, we have, as $z \rightarrow 0$,
\begin{align}
  U(\gamma,\beta,z) &- \frac{\Gamma(\beta-1)}{\Gamma(\gamma)}z^{1-\beta} - \frac{\Gamma(1-\beta)}{\Gamma(\gamma-\beta+1)}\longrightarrow 0, &\text{for $1 < \beta < 2$, and} \label{E:Asympt_U0} \\[4mm]
  U(\gamma,\beta,z) &- \frac{\Gamma(\beta-1)}{\Gamma(\gamma)}z^{1-\beta} - \mathcal{O}\left( z^{\beta-2} \right)\longrightarrow 0, &\text{for $\beta > 2$.} \notag
\end{align}
Moreover, 
\equ[E:Deriv_U]{ U'(\gamma,\beta,z) = -\gamma U(\gamma+1, \beta+1, z). }

We obtain the general solution for $f(\cdot, \alpha)$ as
\begin{align*}
  f(y, \alpha) &= C_1 y g_1(z(y,\alpha)) + C_2 |y| g_2(z(y,\alpha)) \\[3mm]
    &= C_1 y M(\gamma(\alpha),3/2,z(y,\alpha)) + C_2 y U(\gamma(\alpha),3/2,z(y,\alpha)) \\[2mm]
    &\qquad + C_3 |y| M(\gamma(\alpha),3/2,z(y,\alpha)) + C_4 |y| U(\gamma(\alpha),3/2,z(y,\alpha)) \\[3mm]
    &= (C_1 y + C_3 |y|) M(\gamma(\alpha),3/2,z(y,\alpha)) + (C_2 y + C_4 |y|) U(\gamma(\alpha),3/2,z(y,\alpha)).
\end{align*}
The asymptotic behavior of $M(\gamma,\beta,z)$ and $U(\gamma,\beta,z)$ described in~\eqref{E:Asympt_M+} and~\eqref{E:Asympt_U+}, together with the requirement $f(y, \alpha) \rightarrow 0$ as $y \rightarrow - \infty$, yields $C_1 = C_3$ and thus
\[ f(y, \alpha) = C_1 (y + |y|) M(\gamma(\alpha),3/2,z(y,\alpha)) + (C_2 y + C_4 |y|) U(\gamma(\alpha),3/2,z(y,\alpha)). \]
Next, since we require $f(\cdot, \alpha)$ to be continuous at $0$, we need
\begin{align*}
  \lim_{y \nearrow 0}(C_2 y - C_4 y) U(\gamma(\alpha),3/2,z(y,\alpha)) &= (C_4 - C_2 ) \lim_{y \searrow 0} y U(\gamma(\alpha),3/2,z(y,\alpha)) \\
    &= (C_4 - C_2) \frac{\sqrt{2\pi}}{\sqrt{2\alpha-1} \Gamma( \gamma(\alpha))}
\end{align*}
to be equal to
\[ \lim_{y \searrow 0}(C_2 y + C_4 y) U(\gamma(\alpha),3/2,z(y,\alpha)) = (C_2 + C_4) \frac{\sqrt{2\pi}}{\sqrt{2\alpha-1} \Gamma(\gamma(\alpha))}, \]
where we used~\eqref{E:Asympt_U0} to calculate the limits. Thus, $C_2 = 0$ and we get
\[ f(y, \alpha) = C_1 (y + |y|) M(\gamma(\alpha),3/2,z(y,\alpha)) + C_4 |y| U(\gamma(\alpha),3/2,z(y,\alpha) ). \]
We also require $f'(\cdot, \alpha)$ to be continuous at $0$. Using~\eqref{E:Deriv_M}, \eqref{E:Asympt_U0}, and~\eqref{E:Deriv_U} we thus need
\begin{align*}
  \lim_{y \nearrow 0} f'(y,\alpha) &= \lim_{y \nearrow 0} \left[ - C_4 U(\gamma(\alpha),3/2,z(y,\alpha)) + C_4 \alpha y^2 U(\gamma(\alpha) + 1,5/2,z(y,\alpha) ) \right] \\
  &= C_4 D(\alpha)
\end{align*}
to be equal to
\begin{align*}
  \lim_{y \searrow 0} f'(y,\alpha) &= \lim_{y \searrow 0} \Big[ 2 C_1 M(\gamma(\alpha),3/2,z(y,\alpha) ) + 4/3 C_1 \alpha y^2 M(\gamma(\alpha) + 1,5/2,z(y,\alpha) ) \\
  &\qquad \qquad + C_4 U(\gamma(\alpha),3/2,z(y,\alpha) ) - C_4 \alpha y^2 U(\gamma(\alpha) + 1,5/2,z(y,\alpha) ) \Big] \\
  &= 2 C_1 - C_4 D(\alpha),
\end{align*}
where
\[ D(\alpha) = \lim_{y \searrow 0} \left[ \alpha y^2 U(\gamma(\alpha) + 1,5/2,z(y,\alpha) ) -  U(\gamma(\alpha),3/2,z(y,\alpha) ) \right]. \]
This necessitates that $C_4 D(\alpha) = 2 C_1 - C_4 D(\alpha)$ or equivalently $C_4 = C_1 /D(\alpha)$. Using~\eqref{E:Asympt_U0} we obtain
\begin{align*}
  D(\alpha) &= \lim_{y \searrow 0} \Bigg[ \alpha y^2 \frac{\Gamma(3/2)}{\Gamma\left(\gamma(\alpha)+1\right)} (z(y,\alpha))^{-3/2} + \alpha y^2 \mathcal{O} \left( (z(y,\alpha))^{1/2} \right) \\
    &\qquad \qquad \qquad - \frac{\Gamma(1/2)}{\Gamma\left(\gamma(\alpha)\right)} (z(y,\alpha))^{-1/2} - \frac{\Gamma(-1/2)}{\Gamma(\gamma(\alpha) - 1/2 )} \Bigg] \\
  &= - \frac{\Gamma(-1/2)}{\Gamma(\gamma(\alpha) - 1/2 )} = \frac{2 \sqrt{\pi}}{\Gamma(\gamma(\alpha) - 1/2)}
\end{align*}
and thus
\[ C_4 = C_1 \Gamma(\gamma(\alpha)-1/2) / (2\sqrt{\pi}). \]
This finally yields
\begin{align*} 
  f(y,\alpha) &= C(\alpha) \left[ (y + |y|) M(\gamma(\alpha),3/2,z(y,\alpha) ) + \frac{|y| \Gamma(\gamma(\alpha)-1/2)}{2\sqrt{\pi}} U(\gamma(\alpha),3/2,z(y,\alpha) ) \right] \\[2mm]
 &= C(\alpha) \left[ y M(\gamma(\alpha),3/2,z(y,\alpha) ) + \frac{\Gamma(\gamma(\alpha) - 1/2)}{\sqrt{2(2\alpha-1)}\Gamma(\gamma(\alpha))} M(\gamma(\alpha)-1/2,1/2,z(y,\alpha) ) \right].
\end{align*}

From $B(\alpha) = f(B(\alpha), \alpha)$ and $1 = f'(B(\alpha), \alpha)$ we obtain the following equation in $B(\alpha)$, which is independent of $C(\alpha)$:
\equ[_E:Neu1]{ B(\alpha) = f(B(\alpha), \alpha) \ \big/ f'(B(\alpha), \alpha). }
In order to see that~\eqref{_E:Neu1} admits a positive solution, note that the function $h(y, \alpha) = f(y, \alpha) - y f'(y, \alpha)$ is continuous with
\[ h(0, \alpha) = \frac{\Gamma(\gamma(\alpha) - 1/2)}{\sqrt{2(2\alpha-1)}\Gamma(\gamma(\alpha))} > 0 \qquad \text{and} \qquad \lim_{y\rightarrow\infty} f(y, \alpha) = - \infty.  \]
Finally, $C(\alpha)$ is obtained via the relation $B(\alpha) = f(B(\alpha), \alpha)$.

\subsubsection{The case $0 < \alpha < 1/2$} We have $\gamma(\alpha) < 0$ and $z(y, \alpha) < 0$ and with $\beta=3/2$ the value of $N(\gamma,\beta,z)$ is an imaginary number. We set
\begin{align}
  W(\gamma,3/2,z) &= \frac{\Gamma(3/2-\gamma)}{\Gamma(3/2)}M(\gamma,3/2,z) - i \frac{\Gamma(1-\gamma)}{\Gamma(1/2)}N(\gamma,3/2,z) \notag \\[4mm]
           &= \frac{\Gamma(3/2-\gamma)}{\Gamma(3/2)} M(\gamma,3/2,z) - \frac{\Gamma(1-\gamma)}{\Gamma(2-3/2)} (-z)^{1-3/2} M(\gamma-3/2+1,2-3/2,z), \label{E:W_prel}
\end{align}
and use~\eqref{E:W_prel} as a template to define $W(\gamma,\beta,z)$ for all $\beta$ in the following way:
\[ W(\gamma,\beta,z) = \frac{\Gamma(\beta-\gamma)}{\Gamma(\beta)} M(\gamma,\beta,z) - \frac{\Gamma(1-\gamma)}{\Gamma(2-\beta)} (-z)^{1-\beta} M(\gamma-\beta+1,2-\beta,z). \]
In this way we obtain, by~\eqref{E:Asympt_M-}, a solution of~\eqref{E:Kummer} with
\[ W(\gamma,\beta,z) \rightarrow 0, \qquad \text{as $z \rightarrow -\infty$,} \]
and
\[ W'(\gamma,\beta,z) = \gamma W(\gamma+1, \beta+1 ,z). \]

The following calculations are very similar to the ones in the previous section. Therefore, we skip some details. As before, we find
\[ f(y, \alpha) = (C_1 y + C_3 |y|) M(\gamma(\alpha),3/2,z(y,\alpha) ) + (C_2 y + C_4 |y|) W(\gamma(\alpha),3/2,z(y,\alpha) ). \]
From $f(y) \rightarrow 0$ as $y \rightarrow - \infty$ and the requirement that $f(\cdot, \alpha)$ is continuous at $0$ we get $C_1 = C_3$ and $C_2 = 0$, and so
\[ f(y, \alpha) = C_1 (y + |y|) M(\gamma(\alpha),3/2,z(y,\alpha) ) + C_4 |y| W(\gamma(\alpha),3/2,z(y,\alpha) ). \]
We also require $f'(\cdot, \alpha)$ to be continuous at $0$, i.e., we require
\begin{align*}
  \lim_{y \nearrow 0} f'(y,\alpha) &= \lim_{y \nearrow 0} \left[ - C_4 W(\gamma(\alpha),3/2,z(y,\alpha) ) - C_4 \alpha y^2 W(\gamma(\alpha) + 1,5/2,z(y,\alpha) ) \right] \\
  &= - C_4 E(\alpha)
\end{align*}
to be equal to
\begin{align*}
  \lim_{y \searrow 0} f'(y,\alpha) &= \lim_{y \searrow 0} \Big[ 2 C_1 M(\gamma(\alpha),3/2,z(y,\alpha) ) + 4/3 C_1 \alpha y^2 M(\gamma(\alpha) + 1,5/2,z(y,\alpha) ) \\
  &\qquad + C_4 W(\gamma(\alpha),3/2,z(y,\alpha) ) + C_4 \alpha y^2 W(\gamma(\alpha) + 1,5/2,z(y,\alpha) ) \Big] \\[2mm]
  &= 2 C_1 + C_4 E(\alpha),
\end{align*}
where
\[ E(\alpha) = \lim_{y \searrow 0} \left[ \alpha y^2 W(\gamma(\alpha) + 1,5/2,z(y,\alpha) ) +  W(\gamma(\alpha),3/2,z(y,\alpha) ) \right] = \frac{\Gamma(3/2-\gamma(\alpha))}{\Gamma(3/2)}. \]
Hence we need $-C_4 E(\alpha) = 2 C_1 + C_4 E(\alpha)$ or equivalently
\[ C_4 = - C_1 / E(\alpha) = - C_1 \Gamma(3/2) \big/ \Gamma(3/2-\gamma(\alpha)) \]
which leads to
\begin{align*} 
  f(y,\alpha) &= C(\alpha) \left[ (y + |y|) M(\gamma(\alpha),3/2,z(y,\alpha) ) - \frac{|y| \sqrt{\pi}}{2\Gamma(3/2-\gamma(\alpha))} W(\gamma(\alpha),3/2,z(y,\alpha) ) \right] \\[2mm]
  &= C(\alpha) \left[ y M(\gamma(\alpha),3/2,z(y,\alpha) ) + \frac{\Gamma(1 - \gamma(\alpha)) M(\gamma(\alpha)-1/2,1/2,z(y,\alpha) )}{\sqrt{2(1-2\alpha)}\Gamma(3/2 - \gamma(\alpha))} \right].
\end{align*}
The constants $B(\alpha)$ and $C(\alpha)$ are obtained as before.


\section{Asymptotic behavior}



Consider $X: [0, \infty) \times [0,1] \rightarrow \R$ defined by $X(\alpha, t) = X^{(\alpha)}_t$ with $X^{(\alpha)}_t$ represented as in~\eqref{E:aBB_Sol}. This is a Gaussian random field which is almost surely not continuous at the point $(0, 1)$, since $\Prob(X^{(0)}_1 = 0) = 0$, whereas $\Prob(X^{(\alpha)}_1 = 0) = 1$ for all $\alpha > 0$. We will study how this discontinuity affects the continuity of $V(\alpha)$ at $\alpha = 0$. Note that, as mentioned in Section~\ref{S:Intro}, we have $V(0)=0$.

\begin{theorem}\label{T:LIM}
  The limiting behavior of $B(\alpha)$, $C(\alpha)$, and $V(\alpha)$ for $\alpha$ close to zero is
  \begin{enumerate}
    \item $\lim_{\alpha \searrow 0} B(\alpha) = \infty$,
    \item $\lim_{\alpha \searrow 0} C(\alpha) = 1/2$,
    \item $\lim_{\alpha \searrow 0} V(\alpha) = 1/\sqrt{2\pi}$.
  \end{enumerate}
\end{theorem}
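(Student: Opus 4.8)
The plan is to reduce the three limits to the behaviour, as $\alpha\searrow0$, of the $C(\alpha)$-free profile in Theorem~\ref{T:1}(1) and of its $y$-derivative. Write $f(y,\alpha)=C(\alpha)\,g(y,\alpha)$ with
\[
  g(y,\alpha)=y\,M\!\bigl(\gamma(\alpha),\tfrac32,z(y,\alpha)\bigr)+\frac{\Gamma(1-\gamma(\alpha))}{\sqrt{2(1-2\alpha)}\,\Gamma(\tfrac32-\gamma(\alpha))}\,M\!\bigl(\gamma(\alpha)-\tfrac12,\tfrac12,z(y,\alpha)\bigr),
\]
and $g'=\partial_y g$. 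Then~\eqref{E:B_Def} says that $B(\alpha)$ is the (unique) positive zero of $\psi(y,\alpha):=g(y,\alpha)-y\,g'(y,\alpha)$, the smooth-fit condition $f'(B(\alpha),\alpha)=1$ from~\eqref{E:ODE} gives $C(\alpha)=1/g'(B(\alpha),\alpha)$, and by~\eqref{E:V0}, $V(\alpha)=f(0,\alpha)=C(\alpha)\,g(0,\alpha)=g(0,\alpha)/g'(B(\alpha),\alpha)$. So it suffices to prove (i)~$B(\alpha)\to\infty$; (ii)~$g(0,\alpha)\to\sqrt{2/\pi}$; (iii)~$g'(B(\alpha),\alpha)\to2$. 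Item (ii) is immediate, since $g(0,\alpha)=\Gamma(1-\gamma(\alpha))\big/\!\bigl(\sqrt{2(1-2\alpha)}\,\Gamma(\tfrac32-\gamma(\alpha))\bigr)$ and $\gamma(\alpha)=\alpha/(2\alpha-1)\to0$, whence $g(0,\alpha)\to\Gamma(1)/(\sqrt2\,\Gamma(\tfrac32))=\sqrt{2/\pi}$; granting (i) and (iii) then yields $C(\alpha)\to\tfrac12$ and $V(\alpha)\to\tfrac12\sqrt{2/\pi}=1/\sqrt{2\pi}$.

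For (i) I would pass to the limit profile. Since $\gamma(\alpha)\to0$ and $z(y,\alpha)\to-y^2/2$, and $M(\gamma,\beta,z)$ is jointly continuous in $(\gamma,\beta,z)$ (for $\beta$ not a non-positive integer) and entire in $z$ — and likewise its $y$-derivatives via~\eqref{E:Deriv_M} — the maps $g(\cdot,\alpha),g'(\cdot,\alpha)$ converge locally uniformly on $\R$, say to $g_0,g_0'$. Letting $\alpha\to0$ in the differential equation of~\eqref{E:ODE} gives $g_0''+y g_0'-g_0=0$, while $g_0(0)=\lim_\alpha g(0,\alpha)=\sqrt{2/\pi}$ by (ii); hence $\psi_0:=g_0-y g_0'$ satisfies $\psi_0'=-y g_0''=-y\psi_0$, so $\psi_0(y)=\sqrt{2/\pi}\,e^{-y^2/2}>0$ for all $y$ (indeed $g_0(y)=2\bigl(y\Phi(y)+\phi(y)\bigr)$ and $g_0'(y)=2\Phi(y)$, with $\Phi,\phi$ the standard normal distribution function and density). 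For any fixed $K>0$ we then have $\psi(\cdot,\alpha)\to\psi_0$ uniformly on $[0,K]$ with $\psi_0\ge\sqrt{2/\pi}\,e^{-K^2/2}>0$ there, so $\psi(\cdot,\alpha)>0$ on $[0,K]$ for all small $\alpha$; since $B(\alpha)$ is the first (and only) positive zero of $\psi(\cdot,\alpha)$, this forces $B(\alpha)>K$, and (i) follows.

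For (iii) I would control the growth of $B(\alpha)$. Differentiating $\psi=g-y g'$ and using $g''=(2\alpha-1)y g'+g$ from~\eqref{E:ODE} gives $\psi_y(y,\alpha)=-y\,\psi(y,\alpha)-2\alpha y^2 g'(y,\alpha)$; multiplying by $e^{y^2/2}$, integrating from $0$ to $B(\alpha)$ and using $\psi(B(\alpha),\alpha)=0$ yields
\[
  g(0,\alpha)=2\alpha\int_0^{B(\alpha)} s^2\,g'(s,\alpha)\,e^{s^2/2}\,ds.
\]
Because $g'(\cdot,\alpha)\ge c>0$ uniformly for small $\alpha$ (on compacts by $g'\to g_0'=2\Phi>0$, for large arguments by the expansion below) and $g(0,\alpha)$ stays bounded, a crude estimate such as $\int_0^B s^2 e^{s^2/2}\,ds\ge(B/2)^3 e^{B^2/8}$ forces $B(\alpha)^2=O\bigl(\ln(1/\alpha)\bigr)$; since $\gamma(\alpha)=O(\alpha)$ and $-z(B(\alpha),\alpha)=\tfrac12(1-2\alpha)B(\alpha)^2=O\bigl(\ln(1/\alpha)\bigr)$, it follows that $\bigl(-z(B(\alpha),\alpha)\bigr)^{-\gamma(\alpha)}=e^{-\gamma(\alpha)\ln(-z(B(\alpha),\alpha))}\to1$. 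On the other hand, feeding~\eqref{E:Asympt_M-} and~\eqref{E:Deriv_M} into the two terms of $g$ — which contribute equal leading orders and add — gives, for $y$ large and uniformly in $\alpha$ near $0$,
\[
  g'(y,\alpha)=\frac{\sqrt{\pi}\,(1-2\gamma(\alpha))}{\Gamma(\tfrac32-\gamma(\alpha))}\,\bigl(-z(y,\alpha)\bigr)^{-\gamma(\alpha)}\bigl(1+\mathcal{O}(y^{-2})\bigr).
\]
Evaluating at $y=B(\alpha)$ and letting $\alpha\to0$ — using $B(\alpha)\to\infty$, $\gamma(\alpha)\to0$, $\Gamma(\tfrac32-\gamma(\alpha))\to\Gamma(\tfrac32)=\sqrt{\pi}/2$ and $\bigl(-z(B(\alpha),\alpha)\bigr)^{-\gamma(\alpha)}\to1$ — gives $g'(B(\alpha),\alpha)\to\sqrt{\pi}/\Gamma(\tfrac32)=2$, which is (iii). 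Together with the first paragraph this proves (1)--(3).

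The step I expect to be the main obstacle is making the expansion of $g'$ above \emph{uniform} in $\alpha$ as $\alpha\to0$, i.e.\ showing that the error terms in~\eqref{E:Asympt_M-} stay under control while the first parameter runs over a neighbourhood of $0$ (respectively of $-\tfrac12$) with $\beta\in\{\tfrac12,\tfrac32\}$ fixed; this can be read off a standard integral representation of $M$, but needs care, and it is also what underlies the uniform lower bound $g'\ge c>0$ used above. The remaining ingredients — the ODE identity for $\psi$ and the Laplace-type lower bound on $\int_0^B s^2 e^{s^2/2}\,ds$ — are routine.
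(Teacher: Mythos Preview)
Your argument is correct and, in places, somewhat cleaner than the paper's. The overall architecture is the same --- prove $B(\alpha)\to\infty$, then read off $C(\alpha)$ and $V(\alpha)$ from the asymptotics of the hypergeometric profile --- but the execution differs in two notable ways.

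For (i), the paper also shows that $\psi(\cdot,\alpha)=g-yg'$ stays positive on compacta, but it does so by unpacking the hypergeometric formulae and invoking the contiguous relation~\eqref{E:M_Rec} to collapse everything to $e^{-B(\alpha)^2/2}$. Your route via the limiting ODE $\psi_0'=-y\psi_0$, giving $\psi_0=\sqrt{2/\pi}\,e^{-y^2/2}$ directly, is more conceptual and avoids the special-function bookkeeping; the price is that you rely on local-uniform convergence of $g,g'$ in the parameter $\alpha$, which is fine here since $\gamma,\beta,z$ stay in compact ranges.

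For (ii)--(iii), the paper computes $1/C(\alpha)=g(B(\alpha),\alpha)/B(\alpha)$ and plugs in~\eqref{E:Asympt_M-} after replacing $\gamma(\alpha)$ by $0$ inside $M$; this tacitly uses $(-z(B(\alpha),\alpha))^{-\gamma(\alpha)}\to1$, i.e.\ $\gamma(\alpha)\ln B(\alpha)\to0$, which is never justified there. Your integral identity $g(0,\alpha)=2\alpha\int_0^{B(\alpha)}s^2 g'(s,\alpha)e^{s^2/2}\,ds$ (from $\psi_y=-y\psi-2\alpha y^2 g'$) is a genuine addition: it yields the growth bound $B(\alpha)^2=O(\ln(1/\alpha))$ and hence makes that step rigorous before you read off $g'(B(\alpha),\alpha)\to2$ from~\eqref{E:Asympt_M-}. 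So your version actually closes a small gap in the paper's computation. The concern you flag --- uniformity of the error in~\eqref{E:Asympt_M-} as the first parameter varies near $0$ (resp.\ $-1/2$) --- is real but routine; it follows for instance from the integral representation $M(\gamma,\beta,z)=\frac{\Gamma(\beta)}{\Gamma(\gamma)\Gamma(\beta-\gamma)}\int_0^1 e^{zt}t^{\gamma-1}(1-t)^{\beta-\gamma-1}\,dt$ (after the shift $\gamma\mapsto\gamma+1$ via~\eqref{E:Deriv_M} to keep the exponent positive), and the same uniform expansion also supplies the lower bound $g'\ge c>0$ for large $y$ that you need in the growth estimate.
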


\begin{proof}
In order to prove (i), assume that there is an $\eps > 0$ and a constant $K > 0$ such that $B(\alpha) < K$ for all $0 < \alpha < \eps$. By Theorem~\ref{T:1}, the constant $B(\alpha)$ is given by the equation $B(\alpha) = f(B(\alpha), \alpha) / f'(B(\alpha), \alpha)$, where
\equ[E:f_again]{ f(y, \alpha) = C(\alpha) \left[ y M(\gamma(\alpha),3/2,z(y,\alpha) ) + \frac{\Gamma(1 - \gamma(\alpha)) M(\gamma(\alpha)-1/2,1/2,z(y,\alpha) )}{\sqrt{2(1-2\alpha)}\Gamma(3/2 - \gamma(\alpha))} \right]. }
The derivative of $f(y, \alpha)$ with respect to $y$ is
\begin{align*}
  f'(y, \alpha) &= C(\alpha) \Bigg[ M(\gamma(\alpha),3/2,z(y,\alpha)) + \frac{2 \alpha y^2 }{3} M(\gamma(\alpha) + 1,5/2,z(y,\alpha)) \\
    &\qquad \qquad + \frac{y \Gamma(1 - \gamma(\alpha)) M(\gamma(\alpha)+1/2,3/2,z(y,\alpha) ) }{\sqrt{2(1-2\alpha)}\Gamma(3/2 - \gamma(\alpha))}  \Bigg].
\end{align*}
From the requirement $B(\alpha) f'(B(\alpha), \alpha) = f(B(\alpha), \alpha)$ we get
\[ \frac{\sqrt{2(1-2\alpha)} \Gamma(3/2 - \gamma(\alpha)) B(\alpha) f'(B(\alpha), \alpha)}{C(\alpha) \Gamma(1-\gamma(\alpha))} = \frac{\sqrt{2(1-2\alpha)} \Gamma(3/2 - \gamma(\alpha)) f(B(\alpha), \alpha)}{C(\alpha) \Gamma(1-\gamma(\alpha))}, \]
which, after plugging in $f(y, \alpha)$ and $f'(y, \alpha)$ and some reordering, gives
\begin{align}
0 = &- M(\gamma(\alpha) - 1/2, 1/2, z(B(\alpha), \alpha)) + B(\alpha)^2 M(\gamma(\alpha) + 1/2, 3/2, z(B(\alpha), \alpha)) \label{E:NewInequ} \\
  &+ \alpha \frac{2 \sqrt{2(1-2\alpha)} \Gamma(3/2 - \gamma(\alpha)) B(\alpha)^3}{3 \Gamma(1-\gamma(\alpha))} M(\gamma(\alpha) + 1, 5/2, z(\gamma(\alpha))). \notag
\end{align}
From the assumption that $B(\alpha) < K$ for all $0 < \alpha < \eps$, the continuity of $M$ in all three parameters, and the fact that $\gamma(\alpha) \rightarrow 0$ and $z(B(\alpha), \alpha) + B(\alpha)^2/2 \rightarrow 0$ as $\alpha \rightarrow 0$, we get for small enough $\eps > 0$
\[ M(\gamma(\alpha) - 1/2, 1/2, z(B(\alpha), \alpha)) \geq M(-1/2, 1/2, - B(\alpha)^2/2) - e^{-K^2/2} / 3 \]
and
\[ B(\alpha)^2 M(\gamma(\alpha) + 1/2, 3/2, z(B(\alpha), \alpha)) \leq e^{-K^2/2} / 3 + B(\alpha)^2 M(1/2, 3/2, - B(\alpha)^2/2), \]
as well as
\[ \alpha \frac{2 \sqrt{2(1-2\alpha)} \Gamma(3/2 - \gamma(\alpha)) B(\alpha)^3}{3 \Gamma(1-\gamma(\alpha))} M(\gamma(\alpha) + 1, 5/2, z(\gamma(\alpha))) \leq e^{-K^2/2} / 3. \]
Inserting those estimates into~\eqref{E:NewInequ} yields
\begin{align}
  0 &\leq - M(-1/2, 1/2, - B(\alpha)^2/2) + e^{-K^2/2} / 3 \notag \\[2mm]
    &\qquad + e^{-K^2/2} / 3 + B(\alpha)^2 M(1/2, 3/2, - B(\alpha)^2/2) + e^{-K^2/2} / 3 \notag \\[3mm]
    &= - M(-1/2, 1/2, - B(\alpha)^2/2) - 2 \frac{-B(\alpha)^2}{2} M(1/2, 3/2, - B(\alpha)^2/2) + e^{-K^2/2}. \label{E:NewInequ2}
\end{align}
For $\beta \neq 0$, the function $M$ fulfills the following recurrence relation (see formula (2.2.4) in~\cite{Sla60})
\equ[E:M_Rec]{ M(\gamma, \beta, z) - M(\gamma-1, \beta, z) - z/\beta M(\gamma, \beta+1, z) = 0. }
Applying this (with $\beta = \gamma = 1/2$ and $z = - B(\alpha)^2/2$) to~\eqref{E:NewInequ2} yields
\[ 0 \leq - M(1/2, 1/2, - B(\alpha)^2/2) + e^{-K^2/2} = - e^{- B(\alpha)^2/2} + e^{-K^2/2}. \]
This is a contradiction to the assumption that $B(\alpha) < K$ for all $0 < \alpha < \eps$. Hence, (i) is proven.

By Theorem~\ref{T:1}, the constant $C(\alpha)$ is determined by the equation $B(\alpha) = f(B(\alpha), \alpha)$, where $f(B(\alpha), \alpha)$ is given in~\eqref{E:f_again}. It follows that
\begin{align*}
  \lim_{\alpha \searrow 0} \frac{1}{C(\alpha)} &= \lim_{\alpha \searrow 0} \left[ M(\gamma(\alpha),3/2,z(B(\alpha),\alpha) ) + \frac{\Gamma(1 - \gamma(\alpha)) M(\gamma(\alpha)-1/2,1/2,z(B(\alpha),\alpha) )}{B(\alpha) \sqrt{2(1-2\alpha)}\Gamma(3/2 - \gamma(\alpha))} \right] \\[2mm]
    &=\lim_{\alpha \searrow 0} \left[ M(\gamma(\alpha),3/2,-B(\alpha)^2/2 ) + \frac{ M(-1/2,1/2,-B(\alpha)^2/2 )}{B(\alpha) \sqrt{2}\Gamma(3/2)} \right].
\end{align*}
We apply~\eqref{E:M_Rec} once again and obtain
\begin{align*}
  \lim_{\alpha \searrow 0} \frac{1}{C(\alpha)} &= \lim_{\alpha \searrow 0} \Bigg[M(\gamma(\alpha) + 1, 3/2, -B(\alpha)^2/2) + B(\alpha)^2/3 M(\gamma(\alpha) + 1,5/2,-B(\alpha)^2/2 ) ) \\
  &\qquad\qquad\qquad + \frac{ M(-1/2,1/2,-B(\alpha)^2/2 )}{B(\alpha) \sqrt{2}\Gamma(3/2)} \Bigg] \\
  &= \lim_{\alpha \searrow 0} \Bigg[M(1, 3/2, -B(\alpha)^2/2) + B(\alpha)^2/3 M(1,5/2,-B(\alpha)^2/2 ) ) \\
  &\qquad\qquad\qquad + \frac{ M(-1/2,1/2,-B(\alpha)^2/2 )}{B(\alpha) \sqrt{2}\Gamma(3/2)} \Bigg].
\end{align*}
Since $B(\alpha) \rightarrow \infty$ as $\alpha \searrow 0$ we get by~\eqref{E:Asympt_M-}
\begin{align*}
  \lim_{\alpha \searrow 0} \frac{1}{C(\alpha)} &= \lim_{\alpha \searrow 0} \Bigg[ \frac{\Gamma(3/2)}{\Gamma(1/2)} \left( \frac{B(\alpha)^2}{2} \right)^{-1} + \frac{B(\alpha)^2 \Gamma(5/2)}{3 \Gamma(3/2)} \left( \frac{B(\alpha)^2}{2} \right)^{-1} \\
    &\qquad\qquad\qquad + \frac{\Gamma(1/2)}{\sqrt{2} \Gamma(1) B(\alpha) \Gamma(3/2) } \left( \frac{B(\alpha)^2}{2} \right)^{1/2} \Bigg] \\
  &= 2,
\end{align*}
which finishes the proof of~(ii).

The statement~(iii) follows immediately from~(ii) and~\eqref{E:V0}. That is,
\[ \lim_{\alpha \searrow 0} V(\alpha) = \lim_{\alpha \searrow 0} C(\alpha) \frac{\Gamma(1 - \gamma(\alpha))}{\sqrt{2(1-2\alpha)}\Gamma(3/2 - \gamma(\alpha))} = \frac{1}{2} \frac{\Gamma(1)}{\sqrt{2} \Gamma(3/2)} = \frac{1}{\sqrt{2\pi}}. \qedhere \]
\end{proof}

\begin{remark}
  We conclude this section with a heuristic argument for the fact $\lim_{\alpha \searrow 0} V(\alpha) = 1/\sqrt{2 \pi}$: For very small $\alpha > 0$, the process $X^{(\alpha)}$ behaves roughly like standard Brownian motion $W = (W_t)_{t \in [0,1]}$, but jumps to $0$ at time $1$. This suggests that
  \[ \lim_{\alpha \searrow 0} V(\alpha) = \E \ G\left(W_1\right), \qquad \text{where} \qquad G(x) = \begin{cases} x, &\text{if $x \geq 0$,} \\ 0, &\text{if $x < 0$.} \end{cases} \]
  Since $W_1$ is a standard normal (and thus symmetric) random variable, we get
  \[ \lim_{\alpha \searrow 0} V(\alpha) = \frac{1}{2} \ \E |W_1| = \frac{1}{2} \ \frac{\sqrt{2}}{\sqrt{\pi}} = \frac{1}{\sqrt{2\pi}}. \]
\end{remark}


\section{Numerical results and discussion}

Based on Theorem~\ref{T:1} and~\eqref{E:V0} we have computed the constants $B(\alpha)$, $C(\alpha)$, and $V(\alpha)$ for different values of $\alpha$ numerically. A plot of $B(\alpha)$ for $0 < \alpha \leq 10$ is given in Figure~\ref{Fig:B}. It indicates that $\lim_{\alpha \searrow 0} B(\alpha) = \infty$ as proven in Theorem~\ref{T:LIM}(i). This implies that the stopping boundary $b(t,\alpha) = B(\alpha) \sqrt{1-t}$ fulfills $\lim_{\alpha \searrow 0} b(t, \alpha) = \infty$ for all $0 \leq t < 1$. On the other hand, $X^{(\alpha)}$ tends to a Brownian motion that jumps to $0$ at time $1$ and thus we expect that $\lim_{\alpha \searrow 0} \tau^*(\alpha) = 1$ almost surely as $\alpha \searrow 0$. 

A plot of $V(\alpha)$ for $0 < \alpha \leq 10$ is given in Figure~\ref{Fig:Va}. In Theorem~\ref{T:LIM}(iii) we have shown that $\lim_{\alpha \searrow 0} V(\alpha) = 1/\sqrt{2\pi} \approx 0.4$, which can be seen in the plot. However, as mentioned in Section~\ref{S:Intro}, we have $V(0) = 0$ and so $V(\alpha)$ is not continuous in $\alpha = 0$. We also see $V(1) \approx 0.37$ as computed in~\cite{Eks09}.

A plot of $V(\alpha)$ for $0.3 \leq \alpha \leq 1.3$ is given in Figure~\ref{Fig:Vb}. As can be seen, $V(\alpha)$ has a local minimum at $\alpha \approx 0.5$ (we conjecture that the local minimum is exactly at $\alpha = 0.5$ but we have not been able to prove this) and a local maximum at $\alpha \approx 0.98$. The non-monotonicity of $V(\alpha)$ can be explained in the following way: increasing $\alpha$ has a decreasing and an increasing effect on the supremum in~\eqref{E:OS_Prob}, since the drift term in~\eqref{E:aBB_SDE} decreases when $X^{(\alpha)}_t$ is positive and increases when $X^{(\alpha)}_t$ is negative. Figure~\ref{Fig:Vb} shows that the first effect dominates the second one for most but not all values of $\alpha$.


\bibliographystyle{plain}
\bibliography{bibl}


\newpage

\begin{figure}
  \caption{The values of $B(\alpha)$ for $0 < \alpha \leq 10$.}\label{Fig:B}
  \includegraphics[width=0.8\textwidth]{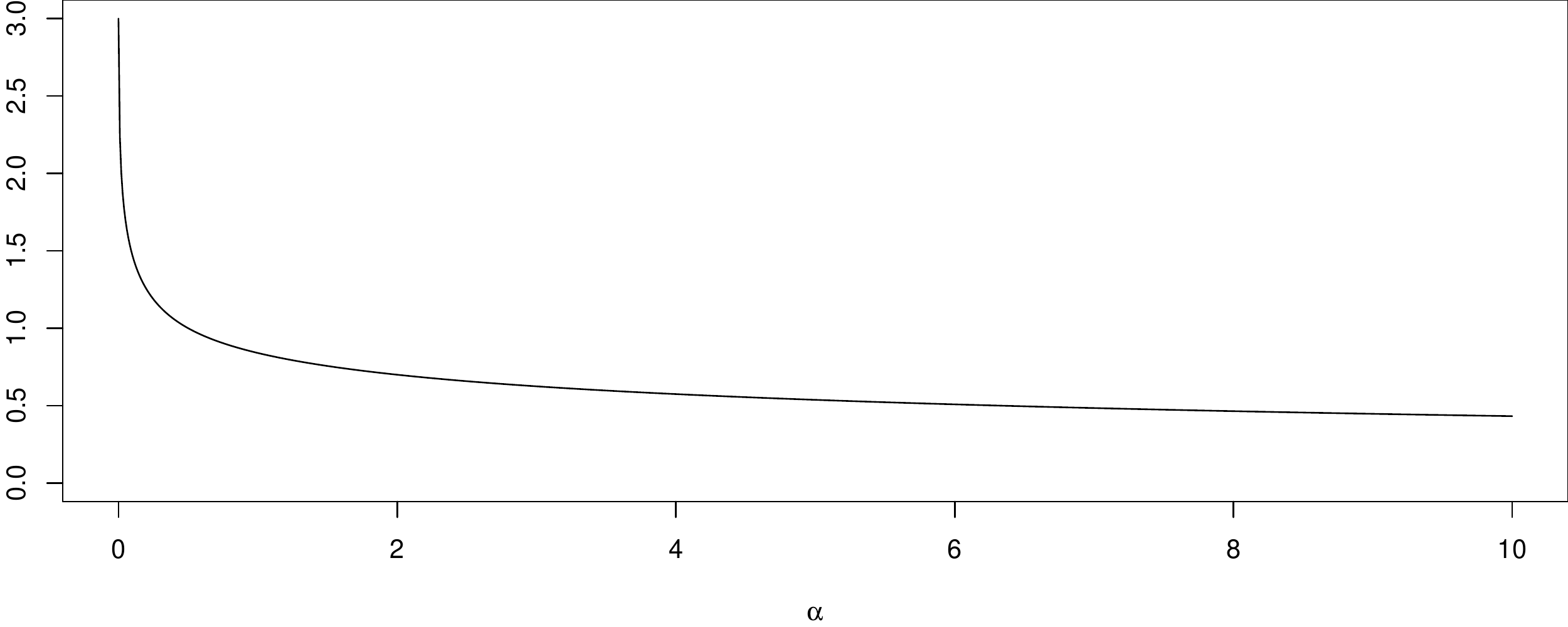}
\end{figure}

\begin{figure}
  \caption{The values of $V(\alpha)$ for $0 < \alpha \leq 10$.}\label{Fig:Va}
  \includegraphics[width=0.8\textwidth]{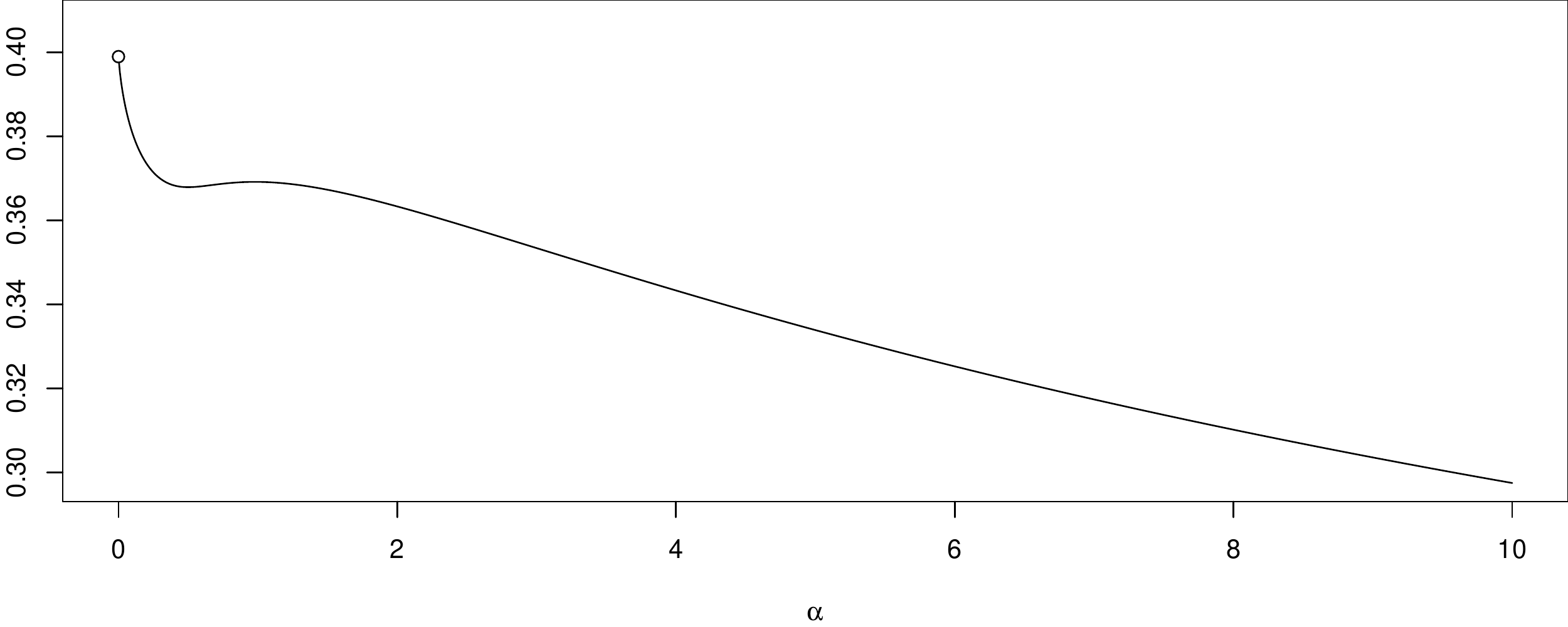}
\end{figure}

\begin{figure}
  \caption{The values of $V(\alpha)$ for $0.3 \leq \alpha \leq 1.3$.}\label{Fig:Vb}
  \includegraphics[width=0.8\textwidth]{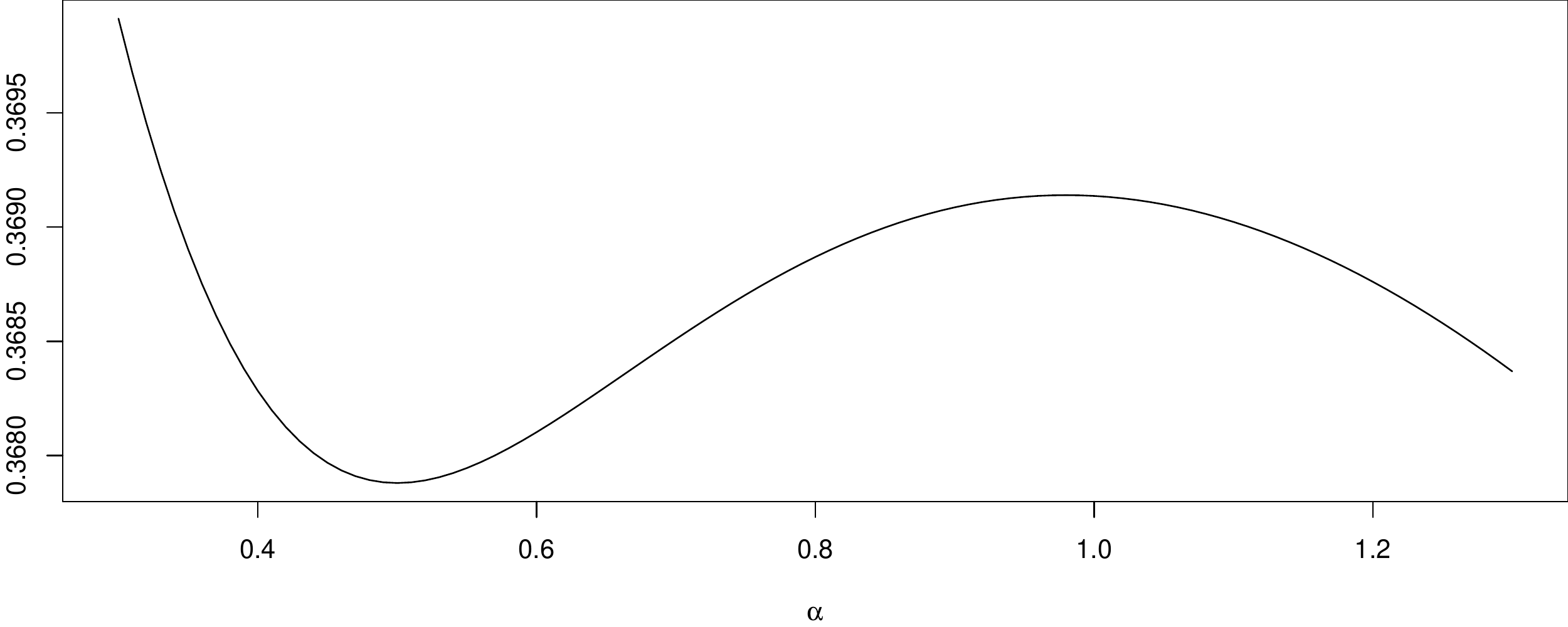}
\end{figure}


\end{document}